    \tikzset{
        block/.style = {draw, rectangle,
            minimum height=1cm,
            minimum width=1.5cm},
        input/.style = {coordinate,node distance=1cm},
        output/.style = {coordinate,node distance=4cm},
        arrow/.style={draw, -latex,node distance=2cm},
        pinstyle/.style = {pin edge={latex-, black,node distance=2cm}},
        sum/.style = {draw, circle, node distance=1cm},
    }
\renewcommand{\leq}{\leqslant}
\renewcommand{\geq}{\geqslant}
\def\build#1_#2^#3{\mathrel{
\mathop{\kern 0pt#1}\limits_{#2}^{#3}}}
\newcommand{\Pp}[1]{\mathbb{P}_p\left(#1\right)}
\renewcommand{\P}[1]{\mathbb{P}_p\left(#1\right)}
\newcommand{\edge}[1]{\left\langle #1\right\rangle}
\newcommand{\Z}{{\mathbb{Z}}}
\newcommand{\R}{\mathbb{R}}
\newcommand{\conex}{\longleftrightarrow}
\newcommand{\norm}[1]{\left\|#1\right\|}
\theoremstyle{plain}
\newtheorem{theorem}{Theorem}
\newtheorem{corollary}{Corollary}
\newtheorem{proposition}[corollary]{Proposition}
\newtheorem{lemma}{Lemma}
\theoremstyle{definition}
\newtheorem{remark}{Remark}
\begin{document}

\title{The probability of connection between two vertices cannot be monotone with the distance for Bernoulli Percolation on transitive graphs}

\author{Alberto M. Campos\footnote{Departamento de Matemática, Universidade Federal de Minas Gerais, Av. Antônio Carlos 6627,  30123-970, Belo Horizonte-MG, Brazil}\; and Bernardo N.B. de Lima\footnote{Departamento de Matemática, Universidade Federal de Minas Gerais, Av. Antônio Carlos 6627,  30123-970, Belo Horizonte-MG, Brazil}}
\date{}
\maketitle


\begin{abstract}
 A popular question in Bernoulli percolation models is if the probability of connection between two vertices in a transitive graph decays monotonically with the distance between these two vertices. For example, on the square lattice is an open question to prove that the probability of the origin being connected to the vertex $(0,n)$ is monotone in $n$. In this short note, we exhibit an example of a  transitive graph in which the probability of connection between vertices does not necessarily decay as the distance of those vertices grows. We also define a critical point for percolation in $\mathbb{Z}^d$, in which using a generalization of the percolation process it is possible to see the same phenomena happening in the embedding of $\mathbb{Z}^d$ over $\mathbb{R}^d$.  
\end{abstract}

{\footnotesize Keywords:  Percolation theory; Percolation;   \\
MSC numbers:  60K35, 82B43}
\section{Introduction}

Bernoulli Percolation is a source of several questions very easy to state but difficult to resolve, {including} open questions. One such problem is to demonstrate that the probability of connection between vertices is monotone with respect {to} the graph distance. Essentially, this question asks whether, in a random medium, the probability of two points being connected within a cluster decreases as the distance between these points increases. This problem was first proposed in 1965 by Hammersley and Welsh \cite{Hammersley1965} in the context of First Passage Percolation. Since then, numerous papers addressing this topic have been published, including \cite{Ahlberg2015,Alm1999,Berg1983,Baptiste2012,Howard2001} in the context of First Passage Percolation, and \cite{Lima2015} in Percolation Theory. A similar and equally captivating question was explored in \cite{Andjel2008} within the framework of oriented percolation. Additionally, it is worth mentioning the bunkbed conjecture, which poses a related question in the context of finite slabs; see \cite{Nikita2025}.


To formalize the problem, let $G=(\mathbb{V}(G),\mathbb{E}(G))$ be a graph, where $\mathbb{V}(G)$ is the set of vertices and $\mathbb{E}(G)\subseteq \{\{v,u\} : v,u\in \mathbb{V}(G), v\neq u\}$ is the set of non-oriented edges. For every $v,u\in \mathbb{V}(G)$, define a path in $G$ from $v$ to $u$ to be an alternating sequence of vertices and edges of the form $\gamma=(v=v_0,e_1,v_1,e_2,\ldots,e_n,v_n=u)$, where $v_i,v_{i+1}\in e_i$ for every $i\in\{0,\ldots,n-1\}$. Let $|\gamma|$ denote the number of edges in the path $\gamma$. Then, for every $v,u\in \mathbb{V}(G)$, define the graph distance:
\begin{align*}
    d_G(v,u)=\min\{ |\gamma| : \gamma \text{ is a path in $G$ from } v \text{ to } u \}.
\end{align*}
If there is no path connecting $v$ to $u$, set $d_G(v,u)=\infty$. A graph $G$ is connected if, for every $v,u\in \mathbb{V}(G)$, we have $d_G(v,u)<\infty$.

We say that a graph $G$ is transitive if, for every pair of vertices $v,u \in \mathbb{V}(G)$, there exists a graph isomorphism that maps the vertex $v$ to $u$. In a transitive graph $G$, we can fix any vertex to be the origin and denote it by $o$. With the origin fixed, define the norm of each $v \in \mathbb{V}(G)$ as $\|v\|_G = d_G(o,v)$.

Consider a Bernoulli percolation model with parameter $p$ on the graph $G$ with the standard underlying probability space denoted by $(\Omega, \mathcal{F}, \mathbb{P}_p)$.

Given $v, u \in \mathbb{V}$, as usual in percolation, define the event $\{v \conex u\}$ as the set of configurations in $\Omega$ where there exists a path formed by open edges connecting $v$ and $u$. Moreover, define the percolation event $\{v \conex \infty\}$ as the set of configurations where there exists an infinite open path starting from $v$.

We say that a transitive graph $G$ has the \textbf{\textit{monotonic property}} at parameter $p \in [0,1]$, if for all pairs of vertices $x, y \in \mathbb{V}(G)$ with $\|x\|_G < \|y\|_G$, it holds that $\mathbb{P}_p(o \conex x) \geq \mathbb{P}_p(o \conex y)$.


Given a positive integer $d$, let $\mathbb{Z}^d$ be the $d$-dimensional hypercubic lattice. The problem of determining whether the graph $\mathbb{Z}^d$ satisfies the monotonic property remains open for all $d\geq 2$. However, the intuitive aspect of this problem has already revealed impressive results. One of the most intriguing contributions was given by J. van den Berg \cite{Berg1983}, where he provided a negative answer on the graph $\mathbb{Z} \times \mathbb{Z}^+ \subset \mathbb{Z}^2$ in the context of First Passage Percolation.

In Section \ref{Sec:2}, we will demonstrate that transitivity alone is not sufficient for a graph to exhibit the monotone property. Specifically, we will prove the following statement.
\begin{theorem}
\label{teo:conterexample}
For every $\beta \in (0,1)$, there exists a transitive graph $G$ and vertices $x, y \in \mathbb{V}(G)$ such that $\|x\|_G < \|y\|_G$, but $\mathbb{P}_p(o \conex x) < \mathbb{P}_p(o \conex y)$ for every $p \in [\beta,1)$.
\end{theorem}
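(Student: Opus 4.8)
The plan is to take for $G$ a transitive ``tree of blobs'' in which a neighbour of the origin hangs from a \emph{bridge}. Fix an integer $m\ge 2$, to be chosen as a function of $\beta$ at the very end, and let $G$ be the Cayley graph of the free product $\Z_{2m}\ast\Z_2=\langle a,b\mid a^{2m}=b^2=1\rangle$ with the symmetric connection set $S=\{a^i:1\le i\le 2m-1,\ i\text{ odd}\}\cup\{b\}$; being a Cayley graph, $G$ is transitive. The structure of $G$ is transparent: each coset $g\langle a\rangle$ induces a copy of the complete bipartite graph $K_{m,m}$ (the even and the odd powers of $a$ being its two sides), and, since in a free product two distinct cosets of $\langle a\rangle$ are joined by at most one edge (a one-line normal-form check), $G$ is an infinite tree of copies of $K_{m,m}$ glued along single bridge edges, the $b$-edges. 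In particular $G$ is \emph{not} $2$-connected -- which is exactly why the usual intuition for monotonicity will fail here, since transitivity does not rule out bridges. Now set $o=e$, let $B_0=\langle a\rangle$ be the blob containing $o$, and take
\[
x=b,\qquad y=a^2 .
\]
Since $\{o,x\}=\{e,b\}$ is an edge, $\norm{x}_G=1$; and since $e\not\sim a^2$ while $e-a-a^2$ is a path, $\norm{y}_G=2$. Thus $\norm{x}_G<\norm{y}_G$, as required.

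The two connection probabilities are now read off from the blob structure. First, $\{o,x\}$ is a bridge, so every open path from $o$ to $x$ uses it and $\{o\conex x\}$ is \emph{exactly} the event that this single edge is open; hence $\Pp{o\conex x}=p$. Second, any simple path joining two vertices of $B_0$ must stay inside $B_0$ -- a subtree hanging off $B_0$ is entered and left through the same vertex -- so $\{o\conex y\}$ is measurable with respect to the edges of $B_0\cong K_{m,m}$ only. Inside $K_{m,m}$ the even vertices $e$ and $a^2$ have $m$ common neighbours (all the odd powers of $a$), giving $m$ pairwise edge-disjoint length-$2$ paths from $o$ to $y$; by independence over disjoint edge sets,
\[
\Pp{o\conex y}\ \ge\ 1-(1-p^2)^m .
\]

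It remains to choose $m=m(\beta)$ so that $1-(1-p^2)^m>p$, that is $(1-p^2)^m<1-p$, for every $p\in[\beta,1)$. For $p\in(3/4,1)$ this already holds for all $m\ge2$, since $(1-p^2)^m\le(1-p)^2(1+p)^2\le 4(1-p)^2<1-p$. For $p\in[\beta,3/4]$ we have $(1-p^2)^m\le(1-\beta^2)^m$, which is $<1/4\le 1-p$ as soon as $m>\log 4/\!\left(-\log(1-\beta^2)\right)$. Hence, letting $m$ be any integer exceeding $\max\{2,\ \log 4/(-\log(1-\beta^2))\}$, we obtain for every $p\in[\beta,1)$
\[
\Pp{o\conex x}=p\ <\ 1-(1-p^2)^m\ \le\ \Pp{o\conex y},
\]
which is the theorem.

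The single delicate point -- and the conceptual heart of the argument -- is the structural observation that a simple path between two vertices of one blob never leaves that blob. It is what makes $\{o,x\}$ a genuine bridge, so that $\Pp{o\conex x}=p$ exactly with no contribution from detours even as $p\uparrow1$, while simultaneously reducing $\{o\conex y\}$ to percolation inside $K_{m,m}$. Once that is granted, the rest is the elementary inequality above; the only role played by $\beta$ is to dictate how large the blobs $K_{m,m}$ must be.
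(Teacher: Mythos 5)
Your proof is correct, and it rests on the same mechanism as the paper's: a vertex at graph distance $2$ that is reachable by many edge-disjoint two-step paths beats a vertex at graph distance $1$, while a tree-of-blocks structure confines every simple path between two points of a block to that block, so the finite computation survives in the infinite transitive graph. The implementations differ in two ways. The paper builds its graph by hand, inductively gluing copies of $K_{2,n-1}$ (peaks to middles and middles to peaks) at cut vertices and arguing transitivity of the limit; your Cayley graph of $\Z_{2m}\ast\Z_2$ gets transitivity for free and delegates the block-tree structure to the normal-form theorem for free products, which is cleaner --- just be aware that the load-bearing step is not merely that two distinct $\langle a\rangle$-cosets meet in at most one edge, but that the coset graph is a tree (equivalently, that every $b$-edge is a bridge); this is standard Bass--Serre/normal-form material, but it is what you actually use and deserves an explicit sentence. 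Second, your comparison vertex $x=b$ sits across a bridge, so $\Pp{o\conex x}=p$ exactly and only the inequality $1-(1-p^2)^m>p$ is needed, whereas the paper's $x=v_1$ lies inside the block, giving $\Pp{o\conex v_1}=p+(1-p)p\left(1-(1-p^2)^{n-2}\right)$ and a slightly more delicate polynomial comparison. Your two-regime verification of the choice of $m(\beta)$ (for $p\in[\beta,3/4]$ and $p\in(3/4,1)$) is correct.
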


Carrying out a careful analysis of the proof of Theorem \ref{teo:conterexample}, it is possible to show, in a new model of percolation that generalizes the usual Bernoulli percolation, that even hypercubic lattices can fail to exhibit the monotonic property. 




The \textit{\textbf{Pipe-Dust percolation model}} on $\mathbb{Z}^d$ with parameter $\lambda > 0$ is defined as follows. {Given $e=(x,y)\in \mathbb{E}(\mathbb{Z}^d)$, let $I_e=\{(x,x+t(y-x))\in \R^d: t\in [0,1]\}$ }be the line segment joining the end-vertices of $e$. Define an independent Poisson point process with rate $\lambda$ on each segment $(I_e)_{e\in \mathbb{E}}$. Let us denote by $\mathbb{P}_{\lambda}$ the underlying product measure over all these Poisson processes. Each point of the Poisson process is called~\emph{dust}.

In this model, each edge of $\mathbb{Z}^d$ contains no dust with probability $e^{-\lambda}$. In this case, the edge is said to be completely open. Otherwise, if the interval contains one or more dust particles (resembling dust inside a pipe), each dust acts as an obstruction. Given two points $x,y \in \cup_{e\in\mathbb{E}} I_e$, we say that $x$ and $y$ are connected if there exists a continuous path contained in $\cup_{e\in\mathbb{E}} I_e$ connecting $x$ to $y$ and avoiding all dust particles.

The Pipe-Dust percolation model generalizes the classical Bernoulli percolation with parameter $p$, taking $\lambda = -\ln(p)$. Moreover, this model allows us to investigate connections between non-integer points.



We consider this model in Section~\ref{Sec:3} and prove the following theorem:

\begin{theorem}\label{teo:2}
For every $d \geq 2$, there exists a critical value $\lambda_c(\mathbb{Z}^d) \in (0,\infty)$ such that
\begin{enumerate}
    \item[a)] If $\lambda > \lambda_c$, then $\mathbb{P}_{\lambda}\left(o\conex \vec{e}\, \right) > \mathbb{P}_{\lambda}\left(o\conex t\vec{e}\, \right)$ for all $t\in (0,1)$.
    \item[b)]\label{teo:2b} If $0 < \lambda < \lambda_c$, then there exists
    $t = t(\lambda) \in (0,1)$ such that $\mathbb{P}_{\lambda}\left(o\conex \vec{e}\, \right) < \mathbb{P}_{\lambda}\left(o\conex t\vec{e}\, \right)$.
\end{enumerate}
Where $\vec e = (1,0,\ldots,0)\in\mathbb{Z}^d$.
\end{theorem}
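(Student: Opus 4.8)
Let $e_0=\langle o,\vec{e}\,\rangle$ be the edge of $\mathbb Z^d$ joining $o$ to $\vec{e}$, and let $q=q(\lambda)$ denote the $\mathbb P_\lambda$-probability that $o$ is connected to $\vec{e}$ using only the segments $I_e$ with $e\neq e_0$; note that $q$ depends on $\lambda$ and $d$ but not on $t$. The first step is to express the two connection probabilities through $q$. Since two points lying on a common dust-free segment are connected, and since a continuous dust-avoiding path can enter or leave the segment $I_{e_0}$ only at the vertices $o$ and $\vec{e}$ (distinct segments of the model meet only at vertices of $\mathbb Z^d$), a short topological argument gives, with
\[
U=\{[o,t\vec{e}\,]\text{ dust-free}\},\qquad V=\{[t\vec{e},\vec{e}\,]\text{ dust-free}\},\qquad W=\{o\conex\vec{e}\text{ off }e_0\},
\]
the identities $\{o\conex\vec{e}\,\}=(U\cap V)\cup W$ and $\{o\conex t\vec{e}\,\}=U\cup(V\cap W)$. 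The events $U,V,W$ are mutually independent, as they depend on the Poisson process restricted to disjoint regions, so $\mathbb P_\lambda(U)=e^{-\lambda t}$, $\mathbb P_\lambda(V)=e^{-\lambda(1-t)}$, $\mathbb P_\lambda(U\cap V)=e^{-\lambda}$, $\mathbb P_\lambda(W)=q$, and inclusion--exclusion yields
\[
\mathbb P_\lambda(o\conex\vec{e}\,)=e^{-\lambda}+(1-e^{-\lambda})q,\qquad \mathbb P_\lambda(o\conex t\vec{e}\,)=e^{-\lambda t}+(1-e^{-\lambda t})e^{-\lambda(1-t)}q.
\]

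Subtracting and factoring, everything collapses to the clean identity
\[
\mathbb P_\lambda(o\conex t\vec{e}\,)-\mathbb P_\lambda(o\conex\vec{e}\,)=\bigl(1-e^{-\lambda(1-t)}\bigr)\bigl(e^{-\lambda t}-q(\lambda)\bigr).
\]
For $t\in(0,1)$ and $\lambda>0$ the first factor is strictly positive, so the sign of the difference is that of $e^{-\lambda t}-q(\lambda)$, and as $t$ ranges over $(0,1)$ the quantity $e^{-\lambda t}$ decreases strictly from $1$ to $e^{-\lambda}$. Thus the whole behaviour is decided by the single comparison of $q(\lambda)$ with $e^{-\lambda}$: if $q(\lambda)\leq e^{-\lambda}$ the difference keeps one strict sign for every $t\in(0,1)$, whereas if $q(\lambda)>e^{-\lambda}$ the opposite strict inequality holds for every $t>-\lambda^{-1}\log q(\lambda)$, hence for some $t\in(0,1)$. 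One then sets $\lambda_c$ to be the threshold separating $\{\lambda:q(\lambda)>e^{-\lambda}\}$ from $\{\lambda:q(\lambda)\leq e^{-\lambda}\}$. It lies in $(0,\infty)$ by two elementary estimates. As $\lambda\to 0$: in $\mathbb Z^d\setminus e_0$ every edge-cutset separating $o$ from $\vec{e}$ has size at least $2d-1$, so a Peierls-type bound gives $1-q(\lambda)=O\bigl((1-e^{-\lambda})^{2d-1}\bigr)=O(\lambda^{2d-1})$, while $1-e^{-\lambda}=\lambda+O(\lambda^2)$; since $d\geq2$ this forces $q(\lambda)-e^{-\lambda}=(1-e^{-\lambda})-(1-q(\lambda))>0$ for small $\lambda$. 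As $\lambda\to\infty$: counting the shortest detours shows $q(\lambda)=2(d-1)e^{-3\lambda}+O(e^{-5\lambda})$ (the shortest $o$--$\vec{e}$ path in $\mathbb Z^d\setminus e_0$ has length $3$), which is $<e^{-\lambda}$, so $q(\lambda)-e^{-\lambda}<0$ for large $\lambda$. Continuity of $\lambda\mapsto q(\lambda)$ then yields a sign change.

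The main obstacle is to upgrade this to the clean dichotomy of the statement, i.e.\ to show that $\lambda\mapsto q(\lambda)-e^{-\lambda}$ changes sign exactly once, equivalently that $\{\lambda>0:q(\lambda)>e^{-\lambda}\}$ is an interval of the form $(0,\lambda_c)$. The plan is to study $\psi(\lambda):=e^{\lambda}q(\lambda)$, whose position relative to the level $1$ encodes the two regimes, and to prove that $\psi$ is strictly unimodal. Writing $p=e^{-\lambda}$, one has $\psi'(\lambda)=e^{\lambda}\bigl(q(\lambda)+\tfrac{\d q}{\d\lambda}\bigr)$ and $\tfrac{\d q}{\d\lambda}=-p\,\tfrac{\d}{\d p}\mathbb P_p(W)$, so that $\operatorname{sign}\psi'(\lambda)=\operatorname{sign}\bigl(1-\mathcal E(p)\bigr)$, where by the Margulis--Russo formula
\[
\mathcal E(p)=\frac{p}{\mathbb P_p(W)}\,\frac{\d}{\d p}\mathbb P_p(W)=\mathbb E_p\!\left[\#\{\text{open edges pivotal for }W\}\ \middle|\ W\right].
\]
One checks $\mathcal E(p)\to3$ as $p\to0$ and $\mathcal E(p)\to0$ as $p\to1$, so the real task is to show $\mathcal E$ crosses the level $1$ exactly once; the natural route is to prove the stronger statement that $\mathcal E$ is (weakly) decreasing in $p$ — equivalently that $\lambda\mapsto\log q(\lambda)$ is concave — since more open edges should make the open connection from $o$ to $\vec{e}$ more redundant and hence reduce the conditional number of open pivotal edges. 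Granting this, $\psi$ is unimodal, meets the level $1$ exactly once, and that point is the desired $\lambda_c$; combined with the identity of the second paragraph this proves both (a) and (b). I expect this log-concavity/unimodality step to be by far the most delicate part, the reduction of the first paragraph and the two asymptotic estimates being routine.
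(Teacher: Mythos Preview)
Your reduction to comparing $q(\lambda)=\mathbb P_\lambda\bigl(o\conex\vec e\ \bigm|\ \langle o,\vec e\,\rangle\text{ closed}\bigr)$ with $e^{-\lambda}$, together with the two endpoint estimates, is exactly the paper's route: your factored identity is a sharper form of their Lemma~1, and your small/large--$\lambda$ asymptotics correspond to their Theorems~3 and~4. The divergence is entirely in how you establish \emph{uniqueness} of the crossing $q(\lambda)=e^{-\lambda}$.

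The paper does not try to show that $\mathcal E(p)=p\,\d(\log Q)/\d p$ is monotone, nor that $\lambda\mapsto\log q(\lambda)$ is concave. It invokes instead a ready-made inequality (Grimmett, Theorem~(2.38)): for any increasing event $A$ depending on finitely many edges, $p\mapsto\log\mathbb P_p(A)/\log p$ is non-increasing. Applied to $A=\{o\overset{B_n}{\conex}\vec e\,\}$ in a box with the edge $\langle o,\vec e\,\rangle$ deleted, this forces $\mathbb P_p(A)=p$ to have at most one root in $(0,1)$ (a plateau would make a nontrivial polynomial identically equal to $p$), and analyticity of the infinite-volume limit on $[0,1]\setminus\{p_c\}$ then transfers the single crossing to $F(p)=p$.

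Your proposed route has a genuine gap. Monotonicity of $\mathcal E(p)$ is strictly stronger than Grimmett's lemma, and the heuristic you offer (``more open edges make the connection more redundant, hence fewer open pivotals in expectation'') is unreliable already on tiny graphs. For two parallel $o$--$\vec e$ arcs of lengths $1$ and $3$ one gets $Q(p)=p+p^3-p^4$ and
\[
\mathcal E(p)=\frac{1+3p^2-4p^3}{1+p^2-p^3},\qquad \mathcal E(p)-1=\frac{p^2(2-3p)}{1+p^2-p^3},
\]
so $\mathcal E$ \emph{increases} from $1$ throughout $(0,2/3)$ before decreasing; the heuristic predicts the wrong sign. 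There is no evident mechanism making $\mathbb Z^d\setminus e_0$ immune to this, so the unimodality-via-log-concavity step is not merely ``delicate'' but possibly false. The fix is to abandon that attempt and use Grimmett's monotonicity of $\log\mathbb P_p(A)/\log p$ directly; combined with the finite-volume polynomial argument and analyticity, it yields the clean dichotomy with no additional work.
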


To illustrate this theorem, we present the Remark~\ref{ex:1}, where it is shown that $\lambda_c(\mathbb{Z}^2) = -\ln(p_c(\mathbb{Z}^2))$. In contrast, Remark~\ref{ex:2} illustrates that the same behavior does not occur in the triangular lattice.


\textbf{Acknowledgement:} The authors thank Augusto Teixeira for useful discussions. The research of A.M.C. and B.N.B.L. is supported by CNPq grants 315656/2025-5 and 315861/2023-1, respectively.

\section{Proof of Theorem \ref{teo:conterexample}}\label{Sec:2}

Before proving Theorem \ref{teo:conterexample} in its full generality, we present a simple construction divided into two parts. The first part explores the properties of connections in a finite graph $P$. The second part connects graphs isomorphic to $P$ in a tree-like structure to construct a transitive graph that does not satisfy the monotonic property. The proof itself will follow the same steps as this construction but starts with a more general graph $P$.

\begin{figure}[!ht]
    \centering
    \includegraphics[]{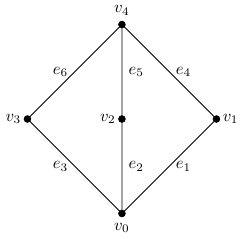}
    \caption{Representation of the graph $P$, and elements $e_i$, $i\in \{1,...,6\}$ responsible for assembling the final graph $G$.}
    \label{fig:1}
\end{figure}

Let us start by defining the graph $P=(\mathbb{V}_P,\mathbb{E}_P)$, where $\mathbb{V}_{P}=\{v_0,v_1,v_2,v_3,v_4\}$ and $\mathbb{E}_P=\{\edge{v_0,v_1},\edge{v_1,v_4},\edge{v_4,v_3},\edge{v_3,v_0},\edge{v_0,v_2},\edge{v_2,v_4}\})$. In this graph, as depicted in Figure \ref{fig:1}, define the set of \textit{\textbf{middle vertices}} as $\{v_1,v_2,v_3\}$ and the set of \textit{\textbf{peak vertices}} as $\{v_0, v_4\}$, where $v_0$ will be identified as the origin.  Note that $\norm{v_4}_P=2>\norm{v_1}_P=1$, and in the standard Bernoulli percolation, the probability of connection is given by:
\begin{align*}
    \Pp{v_o\conex v_4}&=1-(1-p^2)^3,\\
    \Pp{v_o\conex v_1}&=p+(1-p)(1-(1-p^2)^2)p.
\end{align*}Taking the difference of those polynomials, we get:
\begin{align*}
    \Pp{v_o\conex v_1}-\Pp{v_o\conex v_4}&=p(p-1)^2\left(p+\frac{1-\sqrt{5}}{2}\right)\left(p+\frac{1+\sqrt{5}}{2}\right),
\end{align*}
then for $\frac{\sqrt{5}-1}{2}<p<1$, it follows:
\begin{align*}
    \Pp{v_o\conex v_4}>\Pp{v_o\conex v_1}.
\end{align*}

To create a transitive graph $G$ using $P$, we define the gluing operation. Given two graphs $H_1$ and $H_2$, and two fixed vertices $x \in \mathbb{V}(H_1)$ and $y \in \mathbb{V}(H_2)$, the gluing operation produces a new graph $H_3$ by removing the vertices $x$ and $y$ from $H_1$ and $H_2$, and introducing a new vertex $z$. The vertex $z$ inherits all edges previously incident to $x$ and $y$, meaning that $z$ becomes adjacent to all vertices that were originally adjacent to either $x$ or $y$.

Consider an infinite stack of graphs $\{H_j\}_j$, where $H_j = P$ for every $j$, and define the graph $G$ inductively. Start by setting $G_0 = P$, and remove five graphs from the stack, one for each non-glued vertex of $G_0$. For each vertex of $G_0$, glue it to one of the removed graphs from the stack. Specifically, if the vertex is a peak vertex in $G_0$, glue it to a middle vertex, or if the vertex is a middle vertex in $G_0$, glue it to a peak vertex. This construction results in a graph $G_1$ with twenty non-glued vertices. For the general $k$-th step, starting with the graph $G_{k-1}$, remove $5 \cdot 4^{k-1}$ graphs from the stack, one for each non-glued vertex of $G_{k-1}$. Then, for each non-glued vertex of $G_{k-1}$, if the vertex is a peak vertex, glue it to a middle vertex, or if the vertex is a middle vertex, glue it to a peak vertex; this defines the graph $G_{k}$. Since $\{G_k\}_k$ forms an increasing sequence of graphs, define $G$ to be its limit. A representation of a piece of $G$ is shown in Figure \ref{fig:2}.

\begin{figure}[!ht]
    \centering
    \includegraphics[]{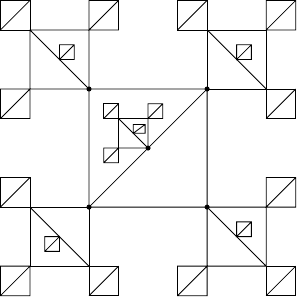}
    \caption{A representation of $G_2$, with copies of graph $P$ glued together in a tree-like structure, representing a piece of graph $G$.}
    \label{fig:2}
\end{figure}

Note that $G$ is transitive, but by construction, it fails to exhibit the monotone property, for all $p > \frac{\sqrt{5} - 1}{2}$.

 The proof of Theorem \ref{teo:conterexample} follows a very similar argument. Begin by defining the graph $P_n = (V_n, E_n)$, where $V_n=\{v_0,v_1,...,v_n\}$, and $E_n=\{\edge{v_j,v_i} ; i\in \{1,...,n-1\},j\in\{0,n\}\}$. Additionally, denote $\{v_0,v_n\}$ to be the peak vertices, and $\{v_1,...,v_{n-1}\}$ to be the middle vertices. Then, related to connection probabilities, we have
\begin{align*}
    \Pp{v_o\conex v_i}=
    \begin{cases}
    1-(1-p^2)^{n-1} &, \; i=n;\\
    p+(1-p)p(1-(1-p^2)^{n-2}) &,\; i\in \{1,2,...,n-1\}.
    \end{cases}
\end{align*} Therefore,
\begin{align*}
    \Pp{v_o\conex v_n}-\Pp{v_o\conex v_1}&=(1-p)(1-p-(1-p^2)^{n-2}),\\
    &\geq (1-p)^2(1-2(1-p^2)^{n-3}).
\end{align*} This implies that the difference in probabilities is positive if $p\geq(1-2^{\frac{-1}{n-3}})^{\frac{1}{2}}$. Furthermore, for every $\beta\in(0,1)$, there exists $n(\beta)>0$ such that for every $n>n(\beta)$, it is true that: 
\begin{align*}
    \Pp{v_o\conex v_n}> \Pp{v_o\conex v_i},\forall p\in [\beta,1) \text{ and }i\in \{1,...,n-1\}.
\end{align*}Again, note that  $\norm{v_n}_{P_n}=2>\norm{v_i}_{P_n}=1$, for all $i\in \{1,...,n-1\}$.

To construct the non-transitive graph $G^{(n)}$, follow the same construction as the graph $G$, but instead of using a stack of graphs $P$, use a stack of graphs $P_n$. In particular, for every $p \in [ \beta, 1)$, by taking $n > n(\beta)$, the graph $G^{(n)}$ does not satisfy the monotone property, concluding the proof.

\section{The Dust-Pipe percolation}\label{Sec:3}\noindent

The construction in Section \ref{Sec:2} demonstrates that there exist artificial transitive graphs that do not satisfy the monotonic property. However, it remains an open question whether graphs like $\mathbb{Z}^d$ satisfy the monotonic property or not. 

Using the Dust-Pipe percolation model on $\mathbb{Z}^d$, one can find points (not necessarily with integer coordinates) that serve as counterexamples for the monotone property at certain values of $\lambda$. To illustrate that the model exhibits a threshold related to this property, observe Figure \ref{fig:3}, and consider the $2d-1$ paths of length less than three that connect the origin $o$ to the point $\vec{e} = (1, 0, \dots, 0)$. Note that, similar to Section~\ref{Sec:2},   increasing the number of parallel paths also increases the difference between the connection probability of the peak and middle vertices. 

\begin{figure}[!ht]
    \centering
    \includegraphics[]{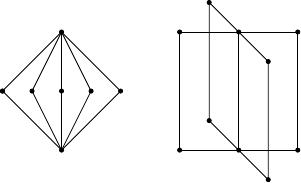}
    \caption{Representation of the graph $P_6$, and the closest paths between $o$ and $(1,0,0)$.}
    \label{fig:3}
\end{figure}

For the Dust-Pipe model, increasing the dimension can force a point in the segment $(o, \vec{e}\,)$ to have a lower connection probability compared to the vertex $\vec{e}$. The critical dimension where this phenomenon begins to occur is $d = 2$.

\begin{theorem}
\label{teo:existence} For every $d\geq 2$, there 
exists $\lambda_0\in (0,\infty)$ and points $x,y \in \bigcup_{e\in \mathbb{E}(\mathbb{Z}^d)}I_e$, such that $\norm{x}<\norm{y}$ and $\mathbb{P}_{\lambda}(o\conex x)<\mathbb{P}_{\lambda}(o\conex y)$ for every $\lambda\in (0,\lambda_0)$.
\end{theorem}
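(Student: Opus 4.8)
The plan is to exhibit the failure of monotonicity on the single edge incident to the origin. Fix a value $t\in(0,1)$, say $t=1/2$, and set $y=\vec e=(1,0,\dots,0)$ and $x=t\vec e=(t,0,\dots,0)$, the point at geodesic distance $t$ from $o$ along the segment $I_{\langle o,\vec e\rangle}$. Then $\norm{x}=t<1=\norm{y}$, so it suffices to prove $\mathbb{P}_{\lambda}(o\conex \vec e)>\mathbb{P}_{\lambda}(o\conex t\vec e)$ for all small enough $\lambda$. Heuristically, the vertex $\vec e$ carries a great deal of redundancy (the direct edge together with all the detours through the lattice), whereas the interior point $t\vec e$ can be severed from the rest of the network merely by placing dust on both sides of it within its own edge; for small $\lambda$ the latter is the likelier failure, so the farther vertex is in fact better connected.

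To make this precise, let $D$ be the event that $o$ is joined to $\vec e$ by a dust-free path that does \emph{not} use the segment $I_{\langle o,\vec e\rangle}$, and write $q_d(\lambda)=\mathbb{P}_{\lambda}(D)$. Split $I_{\langle o,\vec e\rangle}$ at $t\vec e$ into a left piece of length $t$ and a right piece of length $1-t$, and let $L,R$ be the events that these pieces are dust-free, so $\mathbb{P}_{\lambda}(L)=e^{-\lambda t}$ and $\mathbb{P}_{\lambda}(R)=e^{-\lambda(1-t)}$. The events $L,R,D$ depend on disjoint families of Poisson processes, hence are independent. Because the interior of the edge $\langle o,\vec e\rangle$ is accessible only from its two endpoints, one reads off
\begin{align*}
\{o\conex \vec e\} &= (L\cap R)\cup D,\\
\{o\conex t\vec e\} &= L\cup(L^c\cap R\cap D),
\end{align*}
the second identity using that, once the left piece carries dust, the only route to $t\vec e$ is from the $\vec e$-side, which forces $R$ together with a detour connection $D$. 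Taking probabilities and simplifying, the difference factorises as
\begin{align*}
\mathbb{P}_{\lambda}(o\conex \vec e)-\mathbb{P}_{\lambda}(o\conex t\vec e)=\left(1-e^{-\lambda(1-t)}\right)\left(q_d(\lambda)-e^{-\lambda t}\right).
\end{align*}
Since the first factor is positive, the sign is governed entirely by whether $q_d(\lambda)>e^{-\lambda t}$.

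It then remains to verify $q_d(\lambda)>e^{-\lambda t}$ for small $\lambda$. For this I would use that $o$ and $\vec e$ are joined by two edge-disjoint detours of length three in the $(e_1,e_2)$-plane, namely $o\to\pm e_2\to \vec e\pm e_2\to \vec e$; each is dust-free with probability $e^{-3\lambda}$, and since they share no edges,
\begin{align*}
1-q_d(\lambda)\leq\left(1-e^{-3\lambda}\right)^{2}=O(\lambda^2)\quad(\lambda\to0^+).
\end{align*}
Comparing with $e^{-\lambda t}=1-\lambda t+O(\lambda^2)$ gives $q_d(\lambda)-e^{-\lambda t}\geq \lambda t-O(\lambda^2)>0$ on some interval $(0,\lambda_0)$ with $\lambda_0=\lambda_0(t,d)\in(0,\infty)$, and hence $\mathbb{P}_{\lambda}(o\conex x)<\mathbb{P}_{\lambda}(o\conex y)$ there, as required. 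The argument is uniform in the dimension: the two planar detours exist for every $d\geq2$.

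The step I expect to be most delicate is the exact decomposition of $\{o\conex t\vec e\}$: one must confirm that when the left piece is blocked the connection to $t\vec e$ genuinely requires the independent detour event $D$ (no path can re-enter the interior of the same edge), and that $L,R,D$ are truly independent. Everything else is routine, the only quantitative input being the crude bound $1-q_d(\lambda)=O(\lambda^2)$; this matches the heuristic that isolating the interior point $t\vec e$ is an event of order $\lambda^2$, whereas isolating the vertex $\vec e$ requires dust on the direct edge together with the failure of all detours, an event of order $\lambda^{2d-1}\leq\lambda^3$, so the closer point is the one more easily disconnected.
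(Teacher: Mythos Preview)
Your proof is correct and follows the same overall strategy as the paper: take $y=\vec e$ and $x=t\vec e$ with $t=1/2$, and show that the many length-$3$ detours make $\vec e$ better connected than the interior point for small $\lambda$. The execution differs slightly. The paper bounds the two probabilities separately, $\mathbb{P}_{\lambda}(o\conex t\vec e)\le e^{-\lambda t}+e^{-\lambda(1-t)}-e^{-\lambda}$ and $\mathbb{P}_{\lambda}(o\conex\vec e)\ge 1-(1-e^{-3\lambda})^{2(d-1)}(1-e^{-\lambda})$, and then checks the sign of their difference as a polynomial in $z=e^{-\lambda/2}$ near $z=1$. You instead compute the difference \emph{exactly} as $(1-e^{-\lambda(1-t)})\bigl(q_d(\lambda)-e^{-\lambda t}\bigr)$ and then bound only $q_d$. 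Your exact factorisation is cleaner and in fact anticipates the paper's later Lemma~\ref{Lem:characterization} and the criterion $\mathbb{P}_\lambda(o\conex\vec e\mid\langle o,\vec e\rangle\text{ closed})>e^{-\lambda}$ used to define $\tau_c$; your $q_d$ is precisely that conditional probability. The step you flag as delicate (the decomposition of $\{o\conex t\vec e\}$ and the independence of $L,R,D$) is fine: interior points of $I_{\langle o,\vec e\rangle}$ have no branching, so any simple open path to $t\vec e$ under $L^c$ must arrive via $\vec e$ through the right piece, and $L,R,D$ depend on disjoint Poisson restrictions.
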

\begin{proof}
 Fix $\vec{e}=(1,0,\ldots,0)\in \mathbb{V}(\mathbb{Z}^d)$, and take $t\in(0,1)$. Then, 
\begin{align} \label{eq:existencebound}
    \mathbb{P}_{\lambda}(o\conex t\vec{e}\,)&\leq e^{-\lambda t}+e^{-\lambda(1-t)}-e^{-\lambda},\\
    \mathbb{P}_{\lambda}(o\conex \vec{e}\,)&\geq 1-(1-e^{-3\lambda})^{2(d-1)}(1-e^{-\lambda}).\nonumber
\end{align} The first inequality relies on the assumption that $\mathbb{P}_{\lambda}(o \conex \vec{e}\,) \leq  1$, while the second restricts the space to paths that consist of three edges or fewer. Figure \ref{fig:3} provides an illustration of such paths in $\mathbb{Z}^3$.

Observe that the bound in Equation~\eqref{eq:existencebound} remains the same for every $d$, and the probability $\mathbb{P}_{\lambda}(o \conex \vec{e}\,)$ increases with $d$. Therefore, if Theorem \ref{teo:existence} can be proven in dimension $d = 2$ using the bounds in equation \eqref{eq:existencebound}, it also holds true in higher dimensions. 

Hence, set $d = 2$, $t = \frac{1}{2}$ and $z = e^{-\lambda/2}$. Comparing the points $(1,0)$ and $\left(\frac{1}{2}, 0\right)$, we obtain:
\begin{align*}
    \mathbb{P}_{\lambda}\left(o\conex \left(\frac{1}{2},0\right)\right)&\leq z(2-z),\\
    \mathbb{P}_{\lambda}(o\conex (1,0))&\geq 1-(1-z^6)^{2}(1-z^2).
\end{align*}

Thus, taking the difference between the bounds above, it holds
\[1-(1-z^6)^{2}(1-z^2)-z(2-z)=z(z-1)^2g(z),\]
where $g(z)=z^{11}+2(z^{10}+z^9+z^8+z^7+z^6-z^4-z^3-z^2-z-1)$.

Since $g(1) = 1$ and $g$ is continuous, there exists $z_0<1$, such that $g(z)>0$ for all $z\in (z_0, 1]$. Thus,
\[\mathbb{P}_{\lambda}\left(o\conex \left(\frac{1}{2},0\right)\right) \leq z(2-z) < 1-(1-z^6)^{2}(1-z^2)\leq  \mathbb{P}_{\lambda}(o\conex (1,0)),\]
as desired. To exemplify, we can take $z_0=0.99$, or $\lambda<0.02$.
\end{proof}

Theorem \ref{teo:existence} shows that for $d \geq 2$, when $\lambda$ is close to zero, the connection probability near the origin is not monotone with respect to the distance. In particular, the topology of the set of points with high connection probability undergoes a phase transition, transitioning between being a connected set or not. Therefore, define:

\begin{equation}\label{eq:criticalcontinuouspoint}
\lambda_c(\mathbb{Z}^d)=\sup\left\{\lambda>0 :\begin{aligned}  \text{There exists}\, t\in& (0,1] \text{ such that }\\  \mathbb{P}_{\lambda}\left(o\conex \vec{e}\, \right)>&   \mathbb{P}_{\lambda}(o\conex t\vec{e}\,)\,\end{aligned}\right\}.
\end{equation}



To continue, it is useful to describe this phenomenon using the Bernoulli percolation framework. Before, we need the following lemma:

\begin{lemma}\label{Lem:characterization}
Let $a,b,c\in[0,1]$ and $\lambda\in (0,\infty)$ arbitrary, define the function:
\begin{align*}
    f_{\lambda}(t)=ae^{-t\lambda}+be^{-(1-t)\lambda}-ce^{-\lambda},\ \forall t\in[0,1].
\end{align*} 
Then, it holds
\begin{align*}
    \min_{t\in[0,1]}f_{\lambda}(t)=\begin{cases}
    f_{\lambda}(1) &\text{, if } \frac{b}{a}\leq  e^{-\lambda};\\
    f_{\lambda}(0) &\text{, if } \frac{b}{a}\geq e^{\lambda};\\
    f_{\lambda}(t_0) &\text{, for some } t_0\in(0,1)\text{ if } e^{-\lambda}< \frac{b}{a}<  e^{\lambda}.
    \end{cases}
\end{align*}
\end{lemma}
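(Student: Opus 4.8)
The plan is to analyze $f_\lambda$ as a smooth function on $[0,1]$ by computing its first two derivatives. We have
\[
f_\lambda'(t) = -a\lambda e^{-t\lambda} + b\lambda e^{-(1-t)\lambda},
\qquad
f_\lambda''(t) = a\lambda^2 e^{-t\lambda} + b\lambda^2 e^{-(1-t)\lambda} \geq 0,
\]
so $f_\lambda$ is convex on $[0,1]$ (strictly convex when $a>0$ or $b>0$; the degenerate case $a=b=0$ is trivial since then $f_\lambda$ is constant and every point is a minimizer, in particular both $t=0$ and $t=1$, which is consistent with the claimed case distinction). For a convex function on an interval the minimum is attained either at an interior stationary point, if one exists, or otherwise at whichever endpoint the function is decreasing toward.

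The key step is to locate the stationary points. Setting $f_\lambda'(t)=0$ gives $a e^{-t\lambda} = b e^{-(1-t)\lambda}$, i.e. $e^{(1-2t)\lambda} = b/a$ (assuming $a>0$; if $a=0<b$ then $f_\lambda'(t)=b\lambda e^{-(1-t)\lambda}>0$ throughout, so $f_\lambda$ is increasing and the minimum is at $t=0$, again matching the stated dichotomy since $b/a = \infty \geq e^\lambda$). Solving, the unique stationary point is $t_0 = \tfrac12\bigl(1 - \tfrac{1}{\lambda}\ln(b/a)\bigr)$. Then I would check: $t_0 \in (0,1)$ exactly when $-\lambda < \ln(b/a) < \lambda$, i.e. $e^{-\lambda} < b/a < e^{\lambda}$, which gives the third case; when $\ln(b/a) \leq -\lambda$ we get $t_0 \geq 1$, so on $[0,1]$ the convex function $f_\lambda$ is non-increasing (its derivative vanishes only at $t_0\geq 1$ and $f_\lambda''>0$ forces $f_\lambda'<0$ on $[0,1)$), hence the minimum is $f_\lambda(1)$; symmetrically, $\ln(b/a) \geq \lambda$ forces $t_0 \leq 0$, $f_\lambda$ non-decreasing on $[0,1]$, and the minimum is $f_\lambda(0)$. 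This exactly reproduces the three cases, with the boundary cases $b/a = e^{\pm\lambda}$ assigned to the endpoints (where $t_0 = 1$ or $t_0=0$), consistent with the non-strict inequalities in the statement.

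The routine part is just verifying the sign bookkeeping in the three regimes; the only thing requiring a little care is handling the degenerate cases $a = 0$ (treated above by direct monotonicity of $f_\lambda$) and $a = b = 0$ (constant function), so that the ratio $b/a$ in the statement is interpreted correctly. I expect no serious obstacle: convexity is immediate from $f_\lambda'' \geq 0$, and everything else is solving one equation and comparing $t_0$ to the endpoints. One presentational choice is whether to phrase the argument via convexity (cleanest) or by directly examining where $f_\lambda'$ changes sign; I would use convexity, stating it as a one-line observation from the second-derivative computation and then doing the case split on the location of $t_0$.
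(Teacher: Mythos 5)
Your proposal is correct and follows essentially the same route as the paper: compute $f_\lambda'$, solve for the unique stationary point $t_0=\tfrac12-\tfrac{1}{2\lambda}\ln(b/a)$, and split into cases according to whether $t_0$ lies in, above, or below $[0,1]$. You are somewhat more careful than the paper in invoking convexity explicitly and in handling the degenerate cases $a=0$ and $a=b=0$, but this is the same argument.
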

\begin{proof}
Taking the derivative, 
\begin{align*}
    f_{\lambda}'(t)=0&\iff t=\frac{1}{2}-\frac{1}{2\lambda}\ln\left(\frac{b}{a}\right).
\end{align*} If $b/a\leq e^{-\lambda}$, then $t\geq 1$, therefore the minimum is attained in $t=1$. An analogous result holds when $b/a\geq e^{\lambda}$. If $e^{\lambda}>b/a>e^{-\lambda}$, then there exists a minimum with some $t\in(0,1)$.
\end{proof}

Observing that 
\begin{align*}
    \mathbb{P}_{\lambda}\left(o\conex t \vec{e}\,\right)=e^{-t\lambda}+  P_{\lambda}\left(o\conex \vec{e}\,\middle| \edge{o,\vec{e}\,}\text{ is closed}\right)(e^{-(1-t)\lambda}-e^{-\lambda}). 
\end{align*} By Lemma \ref{Lem:characterization}, whenever $P_{\lambda}\left(o\conex \vec{e}\,\middle| \edge{o,\vec{e}\,}\text{ is closed}\right)> e^{-\lambda}$, the probability of connection is not monotone near the origin. 
    
    
Now, considering ordinary Bernoulli percolation with parameter $p$, we define the threshold:
\begin{align}
\label{eq:criticalpoint}
\tau_c(\mathbb{Z}^d)=\inf\left\{p \in [0,1] \,: \mathbb{P}_p(o\conex \vec{e}\,\middle| \edge{o,\vec{e}\,}\text{ is closed})>p\right\}.
\end{align} And, with the canonical relation one has $\lambda_c(\mathbb{Z}^d)=-\ln{\tau_c(\mathbb{Z}^d)}$. 

The majority of references in percolation theory utilize the traditional Bernoulli notation. Therefore, working with the definition of $\tau_c(\mathbb{Z}^d)$ in some cases is more natural than $\lambda_c(\Z^d)$. It is essential to emphasize that, beyond the definition, the phenomenon of monotonicity in connection probabilities is only proved in the Dust-Pipe model and it is open in the Bernoulli percolation model.

 Now, let's prove that those values are not trivial, i.e. $\tau_c(\mathbb{Z}^d)\in (0,1)$ or $\lambda_c(\mathbb{Z}^d)\in(0,\infty)$.

\begin{theorem}\label{teo:nor0nor1}
For $d\geq 2$, there are $0<p_0<p_1<1$ such that for every $p\in(0,p_0)$, we have that $\mathbb{P}_p\left(o\conex \vec{e}\,\middle| \edge{o,\vec{e}\,}\text{ is closed }\right)<p$; and for every $p\in(p_1,1)$, we have that $$\mathbb{P}_p\left(o\conex \vec{e}\,\middle| \edge{o,\vec{e}\,}\text{ is closed }\right)>p.$$
\end{theorem}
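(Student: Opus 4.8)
The plan is to establish the two regimes separately, each time comparing the quantity $\mathbb{P}_p\big(o\conex \vec{e}\,\big|\,\edge{o,\vec{e}\,}\text{ closed}\big)$ with $p$.

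\emph{Small $p$ regime.} For $p$ near $0$, I would bound $\mathbb{P}_p\big(o\conex \vec{e}\,\big|\,\edge{o,\vec{e}\,}\text{ closed}\big)$ from above by a union bound over self-avoiding paths from $o$ to $\vec e$ that do not use the edge $\edge{o,\vec e}$. The shortest such path has length $3$, and the number of self-avoiding paths of length $n$ from $o$ to $\vec e$ in $\mathbb{Z}^d$ is at most the connective-constant bound $C(2d-1)^{n}$ (one may even just use $(2d)^n$). Hence the conditional probability is at most $\sum_{n\geq 3} C(2d-1)^n p^n = C'p^3$ for $p < 1/(2d-1)$, which is $o(p)$ as $p\to 0$; in particular it is strictly less than $p$ for all sufficiently small $p$, giving $p_0$.

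\emph{Large $p$ regime.} For $p$ near $1$, the cleanest route is to bound $\mathbb{P}_p\big(o\conex \vec{e}\,\big|\,\edge{o,\vec{e}\,}\text{ closed}\big)$ from below using only the $2(d-1)$ disjoint paths of length $3$ through a common neighbor (the same paths used in \eqref{eq:existencebound} and Figure~\ref{fig:3}), none of which uses the edge $\edge{o,\vec e}$, so the conditioning does not affect them. This gives $\mathbb{P}_p\big(o\conex \vec{e}\,\big|\,\edge{o,\vec{e}\,}\text{ closed}\big)\geq 1-(1-p^3)^{2(d-1)}$. It therefore suffices to find $p_1<1$ with $1-(1-p^3)^{2(d-1)} > p$ for all $p\in(p_1,1)$. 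Both sides equal $1$ at $p=1$; comparing derivatives there, the left side has derivative $+\infty$ (from the $(1-p^3)$ factor raised to a power $\geq 1$, whose derivative blows up once $2(d-1)\geq 2$, i.e. $d\geq 2$), while the right side has derivative $1$, so $1-(1-p^3)^{2(d-1)} - p$ is strictly positive on an interval $(p_1,1)$. Finally, set $p_1<1$ to be the supremum of points where this difference vanishes (it is $<1$ by the derivative comparison and the fact that the difference is a polynomial), and one checks $p_0<p_1$ after possibly shrinking $p_0$.

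\emph{Main obstacle.} The only delicate point is the large-$p$ comparison: one must verify that $2(d-1)\geq 2$ (valid since $d\geq 2$) is exactly what makes the left-hand derivative at $p=1$ dominate, which is why $d=2$ is critical — with a single path of length $3$ (the "$d=1$" analogue) the derivative would merely be $3\cdot 1 = 3$ times something vanishing, hence $0$, and the inequality would fail near $1$. Everything else is a routine union bound and a one-variable polynomial estimate, analogous to the computation already carried out in the proof of Theorem~\ref{teo:existence}. One could alternatively avoid the derivative argument entirely by reusing the explicit polynomial factorization from that proof with $z=p$ and reading off an explicit $p_1$.
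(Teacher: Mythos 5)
Your overall strategy works, but on the large-$p$ side it takes a genuinely different route from the paper and contains one real error in the justification. For small $p$, the paper does essentially what you propose, except it avoids summing over all path lengths: it bounds the event by ``some self-avoiding path of length exactly $3$ starting at $o$ is open,'' giving $\mathbb{P}_p\left(o\conex \vec{e}\,\middle|\edge{o,\vec{e}\,}\text{ closed}\right)\leq 2d(2d-1)^2p^3$; your geometric series over all path lengths is an equally valid variant. For large $p$, the paper does not argue directly in the Bernoulli model at all: it imports Theorem~\ref{teo:existence} (the Dust-Pipe computation, valid for $\lambda<0.02$, i.e.\ $p\geq 0.99$) together with the decomposition of $\mathbb{P}_{\lambda}(o\conex t\vec e\,)$ and Lemma~\ref{Lem:characterization} to conclude that the conditional connection probability exceeds $e^{-\lambda}=p$. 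Your direct lower bound $\mathbb{P}_p\left(o\conex \vec{e}\,\middle|\edge{o,\vec{e}\,}\text{ closed}\right)\geq 1-(1-p^3)^{2(d-1)}$ via the $2(d-1)$ edge-disjoint length-$3$ paths (which indeed avoid the conditioned edge, so the conditioning is harmless) is cleaner and self-contained, and it yields an explicit $p_1$ rather than the value inherited from the continuous computation.

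The error is in your derivative comparison at $p=1$: both the value and the direction of the inference are wrong. Writing $L(p)=1-(1-p^3)^{2(d-1)}$, one has $L'(p)=6(d-1)p^2(1-p^3)^{2d-3}$, and since $2d-3\geq 1$ for $d\geq 2$ this gives $L'(1)=0$, not $+\infty$; nothing blows up. Moreover, if $L'(1)$ really were larger than $1$ (the derivative of the right-hand side $p$), then $L(p)-p$ would be increasing at $p=1$ and hence \emph{negative} just to the left of $1$ --- the opposite of what you need. You are saved by the correct values: $L'(1)=0<1$ gives exactly the desired sign. Concretely, $(1-p^3)^{2(d-1)}\leq (1-p^3)^2=(1-p)^2(1+p+p^2)^2\leq 9(1-p)^2<1-p$ whenever $p>8/9$, so $p_1=8/9$ works for every $d\geq 2$. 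With that fix (and after shrinking $p_0$ below $p_1$ if necessary, as you note) the proof is complete.
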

\begin{proof}
Comparing the probability of connection with the { probability of the existence of a path }of length $3$ (not necessarily ending in the vertex $\vec{e}\,$), we get:
\[\P{o\conex \vec{e}\,\middle| \edge{o,\vec{e}\,} \text{closed} }<\P{\bigcup_{\gamma:|\gamma|=3} \{\text{The path }\gamma \text{ is open}\}}\leq 2d(2d-1)^2 p^3.\]
Thus, when $p<(2d(2d-1))^{-1/2}$, the probability is less than $p$, and we find $p_0$. Using Theorem~\ref{teo:existence}, we can take $p_1=0.99\geq e^{-0.02}$, and that concludes the Theorem for all dimensions $d\geq2$.
\end{proof}

Since $\tau_c(\mathbb{Z}^d)$ belongs to the closed interval $[p_0, p_1] \subset (0,1)$, it remains to show that indeed $\tau_c$ represents a critical phenomena. Essentially, below $\tau_c$, the connection probability is monotone, while above $\tau_c$, the connection probability is not. To establish this, consider the following theorem:

\begin{theorem}\label{thm:analytic}
The function $F(p)=\mathbb{P}_p\left(o\conex \vec{e}\,\middle| \edge{o,\vec{e}\,}\text{ is closed }\right)$ is an analytic function for $p\in[0,1]\setminus \{p_c(\mathbb{Z}^d)\}$.  Moreover, for every $p\in (\tau_c(\mathbb{Z}^d),1)$, it holds that $\P{o\conex \vec{e}\,\middle| \edge{o,\vec{e}\,} \text{closed} }>p$. 
\end{theorem}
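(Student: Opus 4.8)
The plan is to establish analyticity and the inequality separately. For analyticity, I would first reduce $F(p)$ to a ratio of connection-type quantities. Conditioning on $\edge{o,\vec e}$ being closed is a conditioning on an independent coordinate, so
\[
F(p)=\mathbb{P}_p\big(o\conex \vec e \text{ using edges other than }\edge{o,\vec e}\big),
\]
which is a connection probability in the graph $\mathbb{Z}^d$ with the single edge $\edge{o,\vec e}$ deleted. That graph is quasi-transitive with the same critical point $p_c(\mathbb{Z}^d)$ (removing one edge changes neither $p_c$ nor the subcritical/supercritical behaviour). I would then invoke the standard analyticity results for two-point connection functions: in the subcritical phase $p<p_c$, exponential decay of connectivities (Aizenman--Barsky / Menshikov, or the recent Duminil-Copin--Tassion argument) gives that the finite-volume truncations converge geometrically fast and uniformly on compact subsets of a complex neighbourhood of $[0,p_c)$, so $F$ is analytic there; in the supercritical phase $p>p_c$, one writes $F(p)=1-\mathbb{P}_p(o\not\conex \vec e)$ and uses the analyticity of the truncated two-point function together with exponential decay of the truncated connectivity (Chayes--Chayes--Newman / Campanino--Ioffe--Velenik, or more simply the slab results and Grimmett--Marstrand) to get the same conclusion on a complex neighbourhood of $(p_c,1]$. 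Combining the two gives analyticity on $[0,1]\setminus\{p_c\}$.

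For the inequality $F(p)>p$ on $(\tau_c,1)$, the key structural point is that the set $\{p\in[0,1]:F(p)>p\}$ is an interval whose left endpoint is exactly $\tau_c$ by definition \eqref{eq:criticalpoint}. So I would argue: (i) $F(p)>p$ cannot fail for any $p\in(\tau_c,1)$ because of a monotonicity/connectedness argument on the "super-diagonal" set; and (ii) $\tau_c$ itself is not an accumulation point of the complement from the right. Concretely, suppose for contradiction there is some $p^*\in(\tau_c,1)$ with $F(p^*)\le p^*$. Since by Theorem~\ref{teo:nor0nor1} $F(p)>p$ for $p$ close to $1$, and $F-\mathrm{id}$ is continuous, the set $\{p:F(p)-p\le 0\}\cap(\tau_c,1)$ is closed in $(\tau_c,1)$ and bounded above by some $p_1<1$; by definition of $\tau_c$ there is a sequence $p_k\downarrow\tau_c$ with $F(p_k)>p_k$, so the complement is also nonempty and the two sets would have to interleave, producing infinitely many zeros of the analytic function $F(p)-p$ accumulating somewhere in the open interval $(\tau_c,1)\subset[0,1]\setminus\{p_c\}$ (one checks $p_c\notin(\tau_c,1)$: indeed $\tau_c\ge p_0$ and in fact $\tau_c$ is known to exceed $p_c$ for the relevant lattices, but even without that, the zero set would accumulate away from $p_c$). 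An analytic function vanishing on a set with an accumulation point in a connected open set is identically zero there, so $F(p)\equiv p$ on a whole subinterval, contradicting e.g. Theorem~\ref{teo:existence} (which gives strict inequality in one direction) or the explicit polynomial computations available near $p=1$. Hence no such $p^*$ exists.

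The main obstacle is the analyticity input in the \emph{supercritical} regime: subcritical analyticity of the two-point function is classical, but the statement that $p\mapsto\mathbb{P}_p(o\conex \vec e)$ (equivalently $1-\mathbb{P}_p(o\not\conex\vec e)$) extends analytically past $p_c$ into $(p_c,1]$ relies on rather heavy machinery — exponential decay of the truncated connectivity together with Ornstein--Zernike-type or cluster-expansion control, or alternatively the analyticity of the percolation probability $\theta(p)$ on $(p_c,1]$ due to Georgakopoulos--Panagiotis. I would cite this rather than reprove it. A secondary, more elementary obstacle is checking carefully that deleting the edge $\edge{o,\vec e}$ does not shift $p_c$ and that the exponential-decay estimates transfer to the punctured graph; this is routine since the punctured graph contains $\mathbb{Z}^d$ minus a bounded set and is sandwiched between graphs with the same critical behaviour, but it should be stated. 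Once analyticity on $[0,1]\setminus\{p_c\}$ is in hand, the identity-theorem argument for the inequality is short.
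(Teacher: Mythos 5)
Your analyticity argument follows essentially the same route as the paper (subcritical exponential decay of the two-point function, supercritical control via truncated/two-arm connectivities, assembled through a finite-volume expansion), just with different citations; that part is fine, and your remark about transferring estimates to the edge-deleted graph is a harmless bookkeeping point.

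The argument for $F(p)>p$ on $(\tau_c,1)$ has a genuine gap. The claim that the sets $\{F\le \mathrm{id}\}$ and $\{F>\mathrm{id}\}$ ``would have to interleave, producing infinitely many zeros of $F(p)-p$ accumulating in $(\tau_c,1)$'' is false: if $F(p^*)<p^*$ for a single $p^*$ while $F>\mathrm{id}$ near $\tau_c$ and near $1$, the intermediate value theorem only produces \emph{two} zeros, one on each side of $p^*$, and a configuration like $\{F\le\mathrm{id}\}\cap(\tau_c,1)=[a,b]$ is perfectly compatible with analyticity of $F-\mathrm{id}$. Likewise, your opening assertion that $\{p:F(p)>p\}$ is an interval with left endpoint $\tau_c$ ``by definition'' is unjustified --- definition \eqref{eq:criticalpoint} only makes $\tau_c$ the infimum of that set. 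The missing ingredient is exactly the structural input the paper uses: Grimmett's Theorem (2.38), i.e.\ that $\log\Pp{A}/\log p$ is non-increasing in $p$ for increasing events $A$ depending on finitely many edges. Applied to the finite-volume events $\{0\overset{B_n}{\conex}\vec e\,\}$, it shows each $h_n(p)=\log\mathbb{P}_p(0\overset{B_n}{\conex}\vec e\,)/\log p$ crosses the level $1$ monotonically; after ruling out the degenerate cases where $h_n\equiv 1$ on an interval (impossible since $\mathbb{P}_p(0\overset{B_n}{\conex}\vec e\,)$ is a polynomial of degree $>1$) and where the limit is $\equiv 1$ on an interval (excluded by analyticity together with the behaviour near $0$ and $1$), one concludes that $\{p:F(p)>p\}$ really is an up-set, and only then does $F>\mathrm{id}$ hold on all of $(\tau_c,1)$. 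A secondary error: your parenthetical that $\tau_c$ ``is known to exceed $p_c$'' cannot be relied upon --- Remark \ref{ex:1} gives $\tau_c(\Z^2)=p_c(\Z^2)$ and Remark \ref{ex:2} gives $\tau_c(\mathrm{T})<p_c(\mathrm{T})$ --- so $p_c$ may lie inside $(\tau_c,1)$ and any identity-theorem step must be run separately on each side of $p_c$.
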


\begin{proof} 
The proof that, for all $d\geq 3$, $F(p)$ is an analytic function in $[0,1] \setminus \{p_c(\mathbb{Z}^d)\}$ follows the arguments of three theorems from Grimmett's book \cite{Grimmett1999}. Start by applying the animal construction described in Theorem (6.108), and when $p < p_c$, use the exponential decay provided by Theorem (5.4) to ensure that the series is analytic. Then, for $p > p_c$, use the exponential decay of the two-arm probability, stated in Lemma (7.89). 

With the analyticity of $F(p)$ proven, we are going to show that the equation $F(p)=p$ have a unique non trivial solution with $p\in (0,1)$. For this consider the following proposition:

\begin{proposition}[Theorem (2.38) of \cite{Grimmett1999}]\label{thm:logP}
Let $A$ be a increasing event which depends on only finitely many edges of $\mathbb{Z}^d$, and suppose that $0<p<1$. Then $\log{\Pp{A}}/\log{p}$ is a non-increasing function of $p$.
\end{proposition}

Let $n > 0$, and set $\Lambda_n=\{x\in \Z^d: \|x\|\leq n\}$ as a the discrete box of size $n$. Define $B_n$ as the box $\Lambda_n$ with the edge $\edge{ o, \vec{e}\,}$ removed beforehand. The event $\{0 \overset{B_n}{\conex} \vec{e}\,\}$ is an increasing event that depends on a finite number of edges, and thus the function
\begin{align*}
    h_n(p) = \frac{\log \mathbb{P}_p(0 \overset{B_n}{\conex} \vec{e}\,)}{\log p}
\end{align*}
is non-increasing by Proposition~\ref{thm:logP}.

Since $h_n$ is a non-increasing continuous function, for every $n \geq 2$ Theorem~\ref{teo:nor0nor1} implies that $h_n(p) {>} 1$ for $p < p_0$ and $h_n(p) {<} 1$ for $p > p_1$. Consequently, there exists a non-trivial value $p^{\ast}$ such that $h_n(p^{\ast}) = 1$.

If there exist two points $p_1^{\ast}<p_2^{\ast}\in(0,1)$ such that $h_n(p_1^{\ast}) = h_n(p_2^{\ast})=1$, then by Proposition~\ref{thm:logP} the probability $\mathbb{P}_p(0 \overset{B_n}{\conex} \vec{e}\,)$ must be equal to $p$ in $(p_1^{\ast},p_2^{\ast})$. However, this leads to a contradiction, as $\mathbb{P}_p(0 \overset{B_n}{\conex} \vec{e}\,)$ is a polynomial with a degree greater than one.

Moreover, if there exist points $p_1^{\ast}< p_2^{\ast}\in (0,1)$ such that the limits of $h_n$ satisfy
\begin{align*}
    \lim_{n \to \infty} h_n(p_1^{\ast}) = \lim_{n \to \infty} h_n(p_2^{\ast})=1,
\end{align*}
then for every point $p\in(p_1^{\ast},p_2^{\ast})$, the analytic probability $F(p)$ would be equal to $p$. This is also a contradiction, as near $p = 0$ and $p = 1$, this equality does not hold.

This implies that $F(p) = p$ has at most one unique root for $p \in (0,1)$. Thus, by the non-increasing argument, it follows that for every $p \in (\tau_c(\mathbb{Z}^d), 1)$, the probability of connection in the space, conditioned on the event $\{\edge{o, \vec{e} \,}\text{ closed}\}$, is greater than $p$, concluding the proof.
\end{proof}

Since $\lambda_c(\mathbb{Z}^d) = -\ln{\tau_c(\mathbb{Z}^d)}$, it follows that $\lambda_c(\mathbb{Z}^d)$ is also a well-defined, non-trivial critical point. Therefore, Theorem~\ref{teo:2}, item~$b)$ is a direct consequence of Theorem~\ref{thm:analytic}, while item~$a)$ follows from the definition~\eqref{eq:criticalpoint}, Theorem~\ref{teo:nor0nor1}, and Theorem~\ref{thm:analytic}. We conclude this paper with the following remarks: 

\begin{remark} \label{ex:1}
 Observe that at the critical point of $\mathbb{Z}^2$, the dual lattice can be used to show that
for $\mathbb{Z}^2$, we have that $\tau_c(\mathbb{Z}^2)=\frac{1}{2}=p_c(\mathbb{Z}^2)$. 
\end{remark}

\begin{remark}
Using FKG inequality and the characterization Lemma \ref{Lem:characterization}, for $p>\tau_c$, the connection probability attains a local minimum in the interior of the edge for every edge.
\end{remark}
 
\begin{remark} \label{ex:2}
Let $\mathrm{T}$ and $\mathrm{H}$ denote the triangular and hexagonal lattices, respectively, and let $\vec{e}_{\mathrm{T}}$ and $\vec{e}_{\mathrm{H}}$ be any nearest-neighbor vertices of the origin $o$ in the triangular and hexagonal lattices, respectively. Using an argument based on the dual and the substitution method [(11.116) of \cite{Grimmett1999}, page 333], for the critical point $p=2\sin{\pi/18}$:
\begin{align*}
    \begin{cases}
    \mathbb{P}^{\mathrm{T}}_p\left(o\conex \vec{e}_{\mathrm{T}}|  \edge{o,\vec{e}_{\mathrm{T}}\,}  \text{ closed}\right)=p^2 + 2 p (1 - p) A + (1 - p)^2 B;\\
    \mathbb{P}^{\mathrm{H}}_{1-p}\left(o\conex \vec{e}_{\mathrm{H}}|  \edge{o,\vec{e}_{\mathrm{H}}\,}  \text{ closed}\right)=(1 - p)^2 A+ 2 p (1 - p) B;\\
     \mathbb{P}^{\mathrm{H}}_{1-p}\left(o\conex \vec{e}_{\mathrm{H}}|  \edge{o,\vec{e}_{\mathrm{H}}\,}  \text{ closed}\right)+  \mathbb{P}^{\mathrm{T}}_p\left(o\conex \vec{e}_{\mathrm{T}}|  \edge{o,\vec{e}_{\mathrm{T}}\,}  \text{ closed}\right)=1.
    \end{cases}
\end{align*} where $\mathbb{P}^T$ and $\mathbb{P}^H$ are the measures on the triangular and hexagonal lattice. And, for $\{x,y,z\}$  vertices of a triangle in $\mathrm{T}$, let: 
\begin{align*}
        A&= \mathbb{P}^{\mathrm{T}}_p\left(\{x\conex y\} \cup \{x\conex z\}|  \edge{x,y},\edge{y,z} \text{ and } \edge{z,x}  \text{ closed}\right),\\
        B&= \mathbb{P}^{\mathrm{T}}_p\left(\{x\conex y\}|  \edge{x,y},\edge{y,z} \text{ and } \edge{z,x}  \text{ closed}\right).
\end{align*} The solution of the system gives $A=(1-B)$, that implies:
\begin{align*}
    \tau_c(\mathrm{T})<p_c(\mathrm{T})\\
    \tau_c(\mathrm{H})>p_c(\mathrm{H}).
\end{align*} In particular, the $\tau_c$ is not necessarily equal to the critical point of the space. 
\end{remark}

\end{document}